\newenvironment{breakablealgorithm}
  {
   \begin{center}
     \refstepcounter{algorithm}
     \hrule height.8pt depth0pt \kern2pt
     \renewcommand{\caption}[2][\relax]{
       {\raggedright\textbf{\ALG@name~\thealgorithm} ##2\par}%
       \ifx\relax##1\relax 
         \addcontentsline{loa}{algorithm}{\protect\numberline{\thealgorithm}##2}%
       \else 
         \addcontentsline{loa}{algorithm}{\protect\numberline{\thealgorithm}##1}%
       \fi
       \kern2pt\hrule\kern2pt
     }
  }{
     \kern2pt\hrule\relax
   \end{center}
  }
\theoremstyle{definition}
\newtheorem{theorem}{Theorem}[section]
\newcommand{\cond}[1]{\operatorname{cond}\left(#1\right)}
\newcommand{\fl}[1]{\operatorname{fl}\left(#1\right)}
\newcommand{\bigO}[1]{\mathcal{O}\left(#1\right)}
\newcommand{\mach}{\mathbf{u}}
\begin{document}

\begin{abstract}
\noindent  In computer aided geometric design a polynomial is usually
represented in Bernstein form. The de Casteljau algorithm is the most
well-known algorithm for evaluating a polynomial in this form. Evaluation
via the de Casteljau algorithm has relative forward error proportional to
the condition number of evaluation. However, for a particular family of
polynomials, a curious phenomenon occurs: the observed error is much smaller
than the expected error bound. We examine this family and prove a much
stronger error bound than the one that applies to the general case. Then
we provide a few examples to demonstrate the difference in rounding.
\\ \\
\noindent \textit{Keywords}: Polynomial evaluation, Floating-point
arithmetic, Bernstein polynomial, Round-off error, Condition number
\end{abstract}

\tableofcontents

\section{Introduction}

In computer aided geometric design, polynomials are usually expressed in
Bernstein form. Polynomials in this form are usually evaluated by the
de Casteljau algorithm. This algorithm has a round-off error bound
which grows only linearly with degree, even though the number of
arithmetic operations grows quadratically. The Bernstein basis is
optimally suited (\cite{Farouki1987, Delgado2015, Mainar2005})
for polynomial evaluation. Nevertheless the de Casteljau
algorithm returns results arbitrarily less accurate than the working
precision \(\mach\) when evaluating \(p(s)\) is ill-conditioned.
The relative accuracy of the computed
evaluation with the de Casteljau algorithm (\texttt{DeCasteljau}) satisfies
(\cite{Mainar1999}) the following a priori bound:
\begin{equation}\label{de-casteljau-error}
  \frac{\left|p(s) - \mathtt{DeCasteljau}(p, s)\right|}{\left|p(s)\right|} \leq
  \cond{p, s} \times \bigO{\mach}.
\end{equation}

\begin{figure}
  \includegraphics[width=0.75\textwidth]{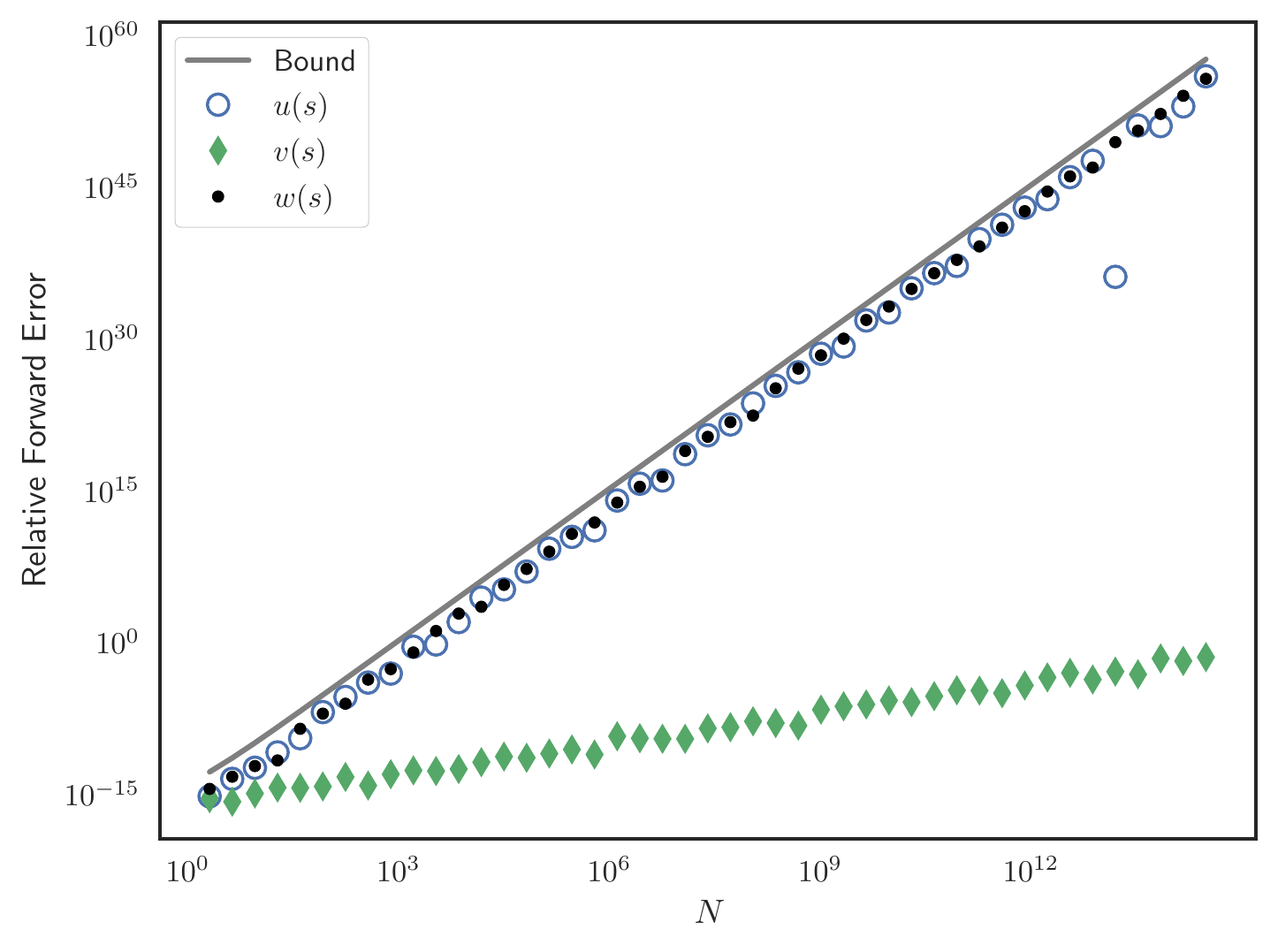}
  \centering
  \captionsetup{width=.75\linewidth}
  \caption{Comparing relative forward error to a priori bound for
    \(u(s) = (1 - 4s)^5, v(s) = (1 - 5s)^5\) and \(w(s) = (1 - 6s)^5\)}
  \label{fig:against-a-priori}
\end{figure}

For example, consider \(u(s) = (1 - 4s)^5, v(s) = (1 - 5s)^5\) and
\(w(s) = (1 - 6s)^5\). For points of the form
\begin{equation}
s_u = \frac{1}{4} + \frac{6}{16N}, \quad
s_v = \frac{1}{5} + \frac{8}{25N}, \quad
s_w = \frac{1}{6} + \frac{10}{36N}
\end{equation}
which are near the multiple roots, the condition numbers of evaluation are:
\begin{equation}
\cond{u, s_u} = \cond{v, s_v} = \cond{w, s_w} =
  \left|N\right|^5 + \bigO{N^4}.
\end{equation}
As we can see in Figure~\ref{fig:against-a-priori}, one of these is not
like the others. Evaluation of both \(u(s)\) and \(w(s)\) produces
relative forward error very close to the a prior bound\footnote{There
are actually three different error bounds, but the \(\left|N\right|^5\)
term dominates so much that they are not visually discernible.}. However,
the observed relative error when evaluating \(v(s)\) is significantly
lower than expected.

As we'll explore in Section~\ref{sec:improved-bound}, \(v(s)\) belongs
to a family of polynomials that the de Casteljau method can evaluate with
a significantly higher level of accuracy than expected. Notice that
\begin{equation}
v(s) = (1 - 5s)^5 = \left[(1 - s) - 4s\right]^5 =
  B_{0, 5}(s) - 4 B_{1, 5}(s) +
  16 B_{2, 5}(s) + \cdots;
\end{equation}
in particular, the Bernstein coefficients are powers of \(2\) (up to sign).
The aforementioned family of polynomials contains any of the form
\begin{equation}
b_0 \left[(1 - s) - 2^t s\right]^n =
  b_0 B_{0, n}(s) - \left(b_0 2^t\right) B_{1, n}(s) +
  \left(b_0 2^{2t}\right) B_{2, n}(s) + \cdots.
\end{equation}
Polynomials in this family have coefficients that can be represented
exactly (i.e. with no round-off).

The paper is organized as follows. Section~\ref{sec:notation} establishes
notation for error analysis with floating point operations and reviews the
de Casteljau algorithm. In
Section~\ref{sec:improved-bound}, the lowered error bound is proved
for polynomials in the special family and numerical experiments compare
observed relative error to the newly improved bound.
Finally, in Section~\ref{sec:implications} we comment on the impact
that this lowered bound makes on comparisons between the de Casteljau
algorithm and the VS algorithm.

\section{Basic notation and results}\label{sec:notation}

\subsection{Floating Point and Forward Error Analysis}

We assume all floating point operations obey
\begin{equation}
  a \star b = \fl{a \circ b} = (a \circ b)(1 + \delta_1) =
  (a \circ b) / (1 + \delta_2)
\end{equation}
where \(\star \in \left\{\oplus, \ominus, \otimes, \oslash\right\}\), \(\circ
\in \left\{+, -, \times, \div\right\}\) and \(\left|\delta_1\right|,
\left|\delta_2\right| \leq \mach\). The symbol \(\mach\) is the unit round-off
and \(\star\) is a floating point operation, e.g.
\(a \oplus b = \fl{a + b}\). (For IEEE-754 floating point double precision,
\(\mach = 2^{-53}\).) We denote the computed result of
\(\alpha \in \mathbf{R}\) in floating point arithmetic by
\(\widehat{\alpha}\) or \(\fl{\alpha}\) and use \(\mathbf{F}\) as the set of
all floating point numbers (see \cite{Higham2002} for more details).
Following \cite{Higham2002}, we will use the following classic properties in
error analysis.

\begin{enumerate}
  \item If \(\delta_i \leq \mach\), \(\rho_i = \pm 1\), then
      \(\prod_{i = 1}^n (1 + \delta_i)^{\rho_i} = 1 + \theta_n\),
  \item \(\left|\theta_n\right| \leq \gamma_n \coloneqq
      n \mach / (1 - n \mach)\),
  \item \((1 + \theta_k)(1 + \theta_j) = 1 + \theta_{k + j}\),
  \item \(\gamma_k + \gamma_j + \gamma_k \gamma_j \leq \gamma_{k + j}
    \Longleftrightarrow (1 + \gamma_k)(1 + \gamma_j) \leq 1 + \gamma_{k + j}\),
  \item \((1 + \mach)^j \leq 1 / (1 - j \mach) \Longleftrightarrow
  (1 + \mach)^j - 1 \leq \gamma_j\).
\end{enumerate}

\begin{theorem}\label{thm:floating-point-exact}
In the absence of overflow or underflow, for \(a, b, -2^t \in \mathbf{F}\)
\begin{align}
\left(-2^t a\right) \otimes b &= -2^t \left(a \otimes b\right) \\
\left(-2^t a\right) \oplus \left(-2^t b\right) &= -2^t \left(a \oplus b\right).
\end{align}
\end{theorem}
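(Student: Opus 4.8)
The plan is to reduce both identities to a single structural fact about the rounding map $\fl{\cdot}\colon \mathbf{R} \to \mathbf{F}$: whenever $x$ and $2^t x$ both lie in the range on which $\fl{\cdot}$ rounds normally (which is exactly what the ``absence of overflow or underflow'' hypothesis buys us), one has
\begin{equation*}
  \fl{(-2^t) x} = (-2^t)\,\fl{x}.
\end{equation*}
In particular, taking $x = a$ and using $\fl{a} = a$, the quantity $-2^t a$ equals $\fl{(-2^t)a} \in \mathbf{F}$, so forming the operand $-2^t a$ introduces no round-off; the same holds for $-2^t b$.

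To prove the structural fact I would argue at the level of the representation. A nonzero real in the normal range is uniquely $\pm M \cdot 2^{e}$ with $1 \leq M < 2$, and $\mathbf{F}$ consists precisely of those numbers for which $M$ has the prescribed number of fractional bits and $e$ lies in the allowed exponent range. The map $x \mapsto (-2^t) x$ flips the sign and sends $e \mapsto e + t$, leaving $M$ untouched; under the no-overflow/no-underflow hypothesis the shifted exponent is still legal, so this map is a bijection of $\mathbf{R}$ onto itself carrying $\mathbf{F}$ onto $\mathbf{F}$. Round-to-nearest picks the closest element of $\mathbf{F}$ with ties broken ``to even'', and both ``closest'' and the parity of the trailing mantissa bit are preserved by a sign flip and an exponent shift, so $\fl{\cdot}$ commutes with this map, which is the claim.

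Granting this, the theorem is two short computations. For the product, the floating point model gives $(-2^t a) \otimes b = \fl{(-2^t a)\, b}$; as real numbers $(-2^t a)\, b = (-2^t)(a b)$, so
\begin{equation*}
  (-2^t a) \otimes b = \fl{(-2^t)(a b)} = (-2^t)\,\fl{a b} = (-2^t)\,(a \otimes b),
\end{equation*}
the middle equality being the structural fact applied with $x = a b$. The sum is identical with $a b$ replaced by $a + b$ and $\otimes$ by $\oplus$:
\begin{equation*}
  (-2^t a) \oplus (-2^t b) = \fl{(-2^t a) + (-2^t b)} = \fl{(-2^t)(a + b)} = (-2^t)\,\fl{a + b} = (-2^t)\,(a \oplus b).
\end{equation*}

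The only real content is the structural fact, and its one subtlety is the role of the underflow hypothesis: with gradual underflow the exponent is bounded below, so scaling a normal number down by $2^{-|t|}$ can push it into the subnormal regime, where the spacing of $\mathbf{F}$ no longer scales with the exponent and $\fl{(-2^t)x} = (-2^t)\fl{x}$ breaks; overflow at the top of the range is analogous. Excluding these cases is exactly the stated hypothesis. This is the standard fact that scaling by a power of two is exact in floating point arithmetic, which one may either cite from \cite{Higham2002} or prove from the bit-level description above; everything else is bookkeeping with the error model of Section~\ref{sec:notation}, so I expect no further difficulty.
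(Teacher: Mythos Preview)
The paper states Theorem~\ref{thm:floating-point-exact} without proof, treating it as a standard fact about binary floating point, so there is no ``paper's own proof'' to compare against. Your argument is correct and supplies exactly the justification one would want: the reduction to the single structural fact \(\fl{(-2^t)x} = (-2^t)\fl{x}\), proved by observing that \(x \mapsto (-2^t)x\) acts on the sign--exponent--mantissa representation by flipping the sign and shifting the exponent while leaving the mantissa untouched, so that round-to-nearest-ties-to-even (which depends only on the mantissa) commutes with it. The two one-line derivations of the product and sum identities from this fact are exactly right.

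One small remark, not a gap: you correctly work under round-to-nearest, which is what the paper's choice \(\mach = 2^{-53}\) signals. It is worth noting that the commutation \(\fl{-x} = -\fl{x}\) fails for the directed modes (round toward \(\pm\infty\)), so the theorem as stated is mode-dependent; under round-toward-zero it would still hold. Since the paper is implicitly in the IEEE-754 default mode, your assumption is the right one. Also note that the hypothesis \(-2^t \in \mathbf{F}\) forces \(t \in \mathbf{Z}\), which is what makes ``leaves the mantissa untouched'' literally true; you use this implicitly and it is fine.
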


\subsection{Bernstein Basis and de Casteljau Algorithm}

A polynomial written in the Bernstein basis is of the form
\begin{equation}
p(s) = \sum_{j = 0}^n b_j B_{j, n}(s)
\end{equation}
where \(B_{j, n}(s) = \binom{n}{j} (1 - s)^{n - j} s^j\). When
\(s \in \left[0, 1\right]\), the Bernstein basis functions are
non-negative. We refer to \(\widetilde{b}_j \coloneqq \binom{n}{j} b_j\)
as the \textit{scaled Bernstein coefficients}.
The condition number of evaluation for \(p(s)\) is
\begin{equation}
\cond{p, s} = \frac{\widetilde{p}(s)}{\left|p(s)\right|}
\end{equation}
where
\(\widetilde{p}(s) \coloneqq \sum_{j = 0}^n \left|b_j\right| B_{j, n}(s)\).

\begin{breakablealgorithm}
  \caption{\textit{de Casteljau algorithm for polynomial evaluation.}}
  \label{alg:de-casteljau}

  \begin{algorithmic}
    \Function{\(\mathtt{result} = \mathtt{DeCasteljau}\)}{$b, s$}
      \State \(n = \texttt{length}(b) - 1\)
      \State \(\widehat{r} = 1 \ominus s\)
      \\
      \For{\(j = 0, \ldots, n\)}
        \State \(\widehat{b}_j^{(n)} = b_j\)
      \EndFor
      \\
      \For{\(k = n - 1, \ldots, 0\)}
        \For{\(j = 0, \ldots, k\)}
          \State \(\widehat{b}_j^{(k)} = \left(
              \widehat{r} \otimes \widehat{b}_j^{(k + 1)}\right) \oplus
              \left(s \otimes \widehat{b}_{j + 1}^{(k + 1)}\right)\)
        \EndFor
      \EndFor
      \\
      \State \(\mathtt{result} = \widehat{b}_0^{(0)}\)
    \EndFunction
  \end{algorithmic}
\end{breakablealgorithm}

\begin{theorem}\label{thm:de-casteljau-bound}
The de Casteljau algorithm (Algorithm~\ref{alg:de-casteljau}) satisfies
\begin{equation}
\left|p(s) - \mathtt{DeCasteljau}(p, s)\right| \leq \gamma_{3n}
  \widetilde{p}(s).
\end{equation}
\end{theorem}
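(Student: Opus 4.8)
The plan is to propagate rounding errors through the triangular array of intermediate values \(\widehat{b}_j^{(k)}\) produced by Algorithm~\ref{alg:de-casteljau} and to show that the final value is a componentwise relative perturbation of the exact de Casteljau recursion. Recall that the exact scheme \(b_j^{(n)} = b_j\), \(b_j^{(k)} = (1 - s)\,b_j^{(k+1)} + s\,b_{j+1}^{(k+1)}\) satisfies \(b_j^{(k)} = \sum_{i=0}^{n-k}\binom{n-k}{i}(1 - s)^{n-k-i}s^i\,b_{j+i}\); in particular \(b_0^{(0)} = p(s)\), and replacing each \(b_i\) by \(|b_i|\) in the \(k = 0\) expression gives exactly \(\widetilde{p}(s)\). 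Throughout I assume \(s \in [0,1]\) (so that every \(B_{i,n}(s) \ge 0\)) and, as usual, that \(3n\mach < 1\).

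First I record that \(\widehat{r} = 1 \ominus s = (1 - s)(1 + \theta_1)\). The heart of the argument is the following claim, proved by downward induction on \(k\): for all \(k\) and \(j\),
\[
\widehat{b}_j^{(k)} = \sum_{i=0}^{n-k}\binom{n-k}{i}(1 - s)^{n-k-i}s^i\,b_{j+i}\,\bigl(1 + \theta^{(i)}\bigr), \qquad \bigl|\theta^{(i)}\bigr| \le \gamma_{3(n-k)}.
\]
The base case \(k = n\) holds with every \(\theta^{(i)} = 0\), since \(\widehat{b}_j^{(n)} = b_j\). For the inductive step, expand one application of the inner loop: writing the two multiplications and the addition as exact operations multiplied by factors \((1 + \delta)\) with \(|\delta| \le \mach\), and substituting \(\widehat{r} = (1 - s)(1 + \theta_1)\), properties (1) and (3) give
\[
\widehat{b}_j^{(k)} = (1 - s)\,\widehat{b}_j^{(k+1)}\,(1 + \theta_3) + s\,\widehat{b}_{j+1}^{(k+1)}\,(1 + \theta_2).
\]
Now substitute the inductive hypothesis for \(\widehat{b}_j^{(k+1)}\) and \(\widehat{b}_{j+1}^{(k+1)}\). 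The coefficient of \(b_{j+i}\) in the result is \(\bigl[\binom{n-k-1}{i} + \binom{n-k-1}{i-1}\bigr](1 - s)^{n-k-i}s^i = \binom{n-k}{i}(1 - s)^{n-k-i}s^i\) by Pascal's rule, while the attached perturbation is a convex combination (valid because \((1 - s)^{n-k-i}s^i \ge 0\)) of the factors \((1 + \theta_{3(n-k-1)})(1 + \theta_3) = 1 + \theta_{3(n-k)}\) and \((1 + \theta_{3(n-k-1)})(1 + \theta_2) = 1 + \theta_{3(n-k)-1}\); hence it can be written \(1 + \theta^{(i)}\) with \(\bigl|\theta^{(i)}\bigr| \le \gamma_{3(n-k)}\), completing the induction.

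Applying the claim with \(k = 0\) and \(j = 0\) gives \(\mathtt{DeCasteljau}(p, s) = \widehat{b}_0^{(0)} = \sum_{i=0}^n b_i B_{i,n}(s)\,(1 + \theta^{(i)})\) with \(\bigl|\theta^{(i)}\bigr| \le \gamma_{3n}\). Subtracting \(p(s) = \sum_{i=0}^n b_i B_{i,n}(s)\), taking absolute values, and using \(B_{i,n}(s) \ge 0\) yields
\[
\bigl|p(s) - \mathtt{DeCasteljau}(p, s)\bigr| = \Bigl|\sum_{i=0}^n b_i B_{i,n}(s)\,\theta^{(i)}\Bigr| \le \gamma_{3n}\sum_{i=0}^n |b_i|\,B_{i,n}(s) = \gamma_{3n}\,\widetilde{p}(s),
\]
as desired. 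I expect the only real obstacle to be the bookkeeping in the inductive step — in particular, realizing that the single rounding error committed in forming \(\widehat{r}\) must be re-counted at every level in which \(\widehat{r}\) is reused (this is exactly why the exponent is \(3n\) rather than \(2n + 1\)), and checking that combining the perturbations of the two incoming branches never pushes the \(\theta\)-subscript above \(3(n-k)\). Everything else is a routine application of the error-analysis facts (1)–(3).
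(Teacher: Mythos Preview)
Your proof is correct and follows essentially the same approach as the paper: both start from the single-step relation \(\widehat{b}_j^{(k)} = (1-s)\,\widehat{b}_j^{(k+1)}(1+\theta_3) + s\,\widehat{b}_{j+1}^{(k+1)}(1+\theta_2)\) and propagate the accumulated \(\theta\)-factors down to \(k=0\). The paper simply unrolls the recursion directly (obtaining the slightly sharper per-term bound \(|\theta^{(j)}|\le\gamma_{3n-j}\)), whereas you formalize the same unrolling as a downward induction with the uniform bound \(\gamma_{3(n-k)}\); either way the final estimate \(\gamma_{3n}\widetilde p(s)\) is the same.
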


\begin{proof}
See Appendix~\ref{sec:appendix-proof-details}.
\end{proof}

\section{Improved Bound}\label{sec:improved-bound}

We seek to analyze our family of polynomials of the form:
\begin{equation}
p(s) = b_0 \left[(1 - s) - 2^t s\right]^n.
\end{equation}
When written in the Bernstein basis, \(p(s)\) has coefficients that satisfy
\begin{equation}\label{exact-ratio}
b_{j + 1} = -2^t b_j.
\end{equation}
In a finite precision binary arithmetic, \eqref{exact-ratio} will hold
exactly (i.e. with no round-off) until overflow or underflow makes it
impossible to represent
\(b_j\) in the given arithmetic (the mantissa will always be the same but
the sign and exponent will change). This useful property remains
true for the intermediate terms computed by the de Casteljau method:
\begin{equation}
b_{j + 1}^{(k)} = (1 - s) b_{j + 1}^{(k + 1)} + s b_{j + 2}^{(k + 1)} =
  -2^t\left[(1 - s) b_j^{(k + 1)} + s b_{j + 1}^{(k + 1)}\right] =
  -2^t b_j^{(k)}.
\end{equation}
Remarkably, this also holds true for the \textbf{computed} values:
\(\widehat{b}_{j + 1}^{(k)} =
-2^t \widehat{b}_j^{(k)}\). Following Theorem~\ref{thm:floating-point-exact}
we have
\begin{align}
\widehat{b}_{j + 1}^{(k)} &= \left[\widehat{r} \otimes
  \widehat{b}_{j + 1}^{(k + 1)}\right] \oplus
  \left[s \otimes \widehat{b}_{j + 2}^{(k + 1)}\right] \\
&= \left[-2^t \left(\widehat{r} \otimes
  \widehat{b}_j^{(k + 1)}\right)\right] \oplus
  \left[-2^t\left(s \otimes \widehat{b}_{j + 1}^{(k + 1)}\right)\right] \\
&= -2^t\left(\left[\widehat{r} \otimes
  \widehat{b}_j^{(k + 1)}\right] \oplus
  \left[s \otimes \widehat{b}_{j + 1}^{(k + 1)}\right]\right) \\
&= -2^t \widehat{b}_j^{(k)}.
\end{align}
Thus, for such \(p(s)\), we only need compute
\(\widehat{b}_0^{(k)}\):
\begin{equation}
\widehat{b}_0^{(k)} = \left[\widehat{r} \otimes
  \widehat{b}_0^{(k + 1)}\right] \oplus \left[s \otimes
  \widehat{b}_1^{(k + 1)}\right] = \left[\widehat{r} \otimes
  \widehat{b}_0^{(k + 1)}\right] \oplus \left[\left(-2^t\right) \left(s \otimes
  \widehat{b}_0^{(k + 1)}\right)\right].
\end{equation}

\begin{theorem}\label{thm:better-error}
For a polynomial of the form
\begin{equation}
p(s) = b_0 \left[(1 - s) - 2^t s\right]^n
\end{equation}
the relative accuracy of the computed
evaluation with the de Casteljau algorithm satisfies
the following a priori bound:
\begin{equation}\label{better-de-casteljau-error}
\frac{\left|p(s) - \mathtt{DeCasteljau}(p, s)\right|}{\left|p(s)\right|}
  \leq \left(1 + \left|\phi\right| \gamma_3\right)^n - 1
\end{equation}
where
\begin{equation}
\phi \coloneqq \frac{(1 - s) + 2^t s}{(1 - s) - 2^t s} =
  \frac{1 + \left(2^t - 1\right) s}{1 - \left(2^t + 1\right) s}.
\end{equation}
\end{theorem}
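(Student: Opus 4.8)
The plan is to exploit the structural collapse already established in the discussion preceding the theorem: because $\widehat{b}_{j+1}^{(k)} = -2^t\,\widehat{b}_j^{(k)}$ holds exactly for the computed intermediate quantities, the whole de Casteljau tableau is governed by the single scalar recurrence
\begin{equation}
\widehat{b}_0^{(k)} = \bigl[\widehat{r} \otimes \widehat{b}_0^{(k+1)}\bigr] \oplus \bigl[(-2^t)\bigl(s \otimes \widehat{b}_0^{(k+1)}\bigr)\bigr], \qquad \widehat{b}_0^{(n)} = b_0.
\end{equation}
First I would track the accumulated relative error in $\widehat{b}_0^{(k)}$ against the exact one-step iterate $b_0^{(k)} = \bigl[(1-s) - 2^t s\bigr]\,b_0^{(k+1)}$. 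Writing $\widehat{b}_0^{(k)} = b_0^{(k)}(1 + \epsilon_k)$ with $\epsilon_n = 0$ (recall $1 \ominus s$, $b_0$, and $-2^t$ incur no extra error beyond what is bundled into the step), I would expand the one floating-point step — two multiplications, one addition, plus the rounding in $\widehat r = 1\ominus s$ — using the standard model $a \star b = (a\circ b)(1+\delta)$ with $|\delta|\le\mach$, and collect the perturbations into $\theta$-terms via properties (1)–(3) of the error toolbox.

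The key step is to show that one de Casteljau step multiplies the relative error by a factor controlled by $1 + |\phi|\gamma_3$. Concretely, the computed step produces
\begin{equation}
\widehat{b}_0^{(k)} = (1-s)\,\widehat{b}_0^{(k+1)}(1+\theta'_3) \;-\; 2^t s\,\widehat{b}_0^{(k+1)}(1+\theta''_3),
\end{equation}
where each $\theta$ carries at most three rounding errors (one from $\widehat r$, one from $\otimes$, one from $\oplus$ on each summand). Dividing by $b_0^{(k)} = \bigl[(1-s)-2^t s\bigr] b_0^{(k+1)}$, the new relative deviation is a convex-type combination of the two $\theta_3$'s with weights $(1-s)/\bigl[(1-s)-2^t s\bigr]$ and $-2^t s/\bigl[(1-s)-2^t s\bigr]$; the sum of the absolute values of these weights is exactly
\begin{equation}
\frac{(1-s) + 2^t s}{\bigl|(1-s) - 2^t s\bigr|} = |\phi|.
\end{equation}
Hence $|1 + \epsilon_k| \le (1 + |\epsilon_{k+1}|)(1 + |\phi|\gamma_3)$, and iterating from $\epsilon_n = 0$ down to $k = 0$ gives $|1 + \epsilon_0| \le (1 + |\phi|\gamma_3)^n$, i.e. $|\epsilon_0| \le (1+|\phi|\gamma_3)^n - 1$. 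Since $p(s) = b_0^{(0)}$ and $\mathtt{DeCasteljau}(p,s) = \widehat{b}_0^{(0)} = p(s)(1+\epsilon_0)$, this is precisely \eqref{better-de-casteljau-error}.

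The main obstacle I anticipate is bookkeeping the rounding errors cleanly so that each summand genuinely absorbs at most three of them and the induction multiplies (rather than adds) error factors. In particular I must be careful that $\widehat r = 1 \ominus s$ is computed once and reused, so its single rounding error $\delta_r$ appears in the $(1-s)$-term of every step but should not be double-counted; treating $\widehat r = (1-s)(1+\delta_r)$ and folding $\delta_r$ into that step's $\theta_3$ is the right move, and one should check that $|\delta_r|\le\mach$ together with the two further roundings still fits inside $\gamma_3$ via property (4). A secondary point worth a sentence: the argument implicitly assumes $(1-s) - 2^t s \neq 0$ so that $\phi$ is defined and the bound is finite — at the root itself $p(s)=0$ and relative error is meaningless, consistent with the bound blowing up. Everything else is routine manipulation with the $\gamma$ and $\theta$ calculus from Section~\ref{sec:notation}.
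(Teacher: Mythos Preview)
Your proposal is correct and follows essentially the same route as the paper's proof: factor $\widehat{b}_0^{(k+1)}$ out of the one-step recurrence to obtain a multiplicative factor $(1-s)(1+\theta_3) - 2^t s(1+\theta_2)$, bound its deviation from $(1-s)-2^t s$ by $\bigl[(1-s)+2^t s\bigr]\gamma_3$, and take the product over the $n$ steps. The only cosmetic difference is that the paper collapses the two per-step $\theta$'s into a single term $(r+2^t s)\theta_3$ before iterating, whereas you keep them separate and bound the weighted combination by $|\phi|\gamma_3$ directly; also, your recurrence should read $|1+\epsilon_k|\le|1+\epsilon_{k+1}|(1+|\phi|\gamma_3)$ rather than $(1+|\epsilon_{k+1}|)$ for the iteration to close cleanly.
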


\begin{proof}
Let \(r = 1 - s\) and \(\widehat{r} = 1 \ominus s\).
For \(k < n\),
\begin{equation}
\widehat{b}_0^{(k)} = \left[\widehat{r} \otimes
  \widehat{b}_0^{(k + 1)}\right] \oplus \left[\left(-2^t\right)
  \left(s \otimes \widehat{b}_0^{(k + 1)}\right)\right] =
  \widehat{b}_0^{(k + 1)} \left[r(1 + \theta_3) - 2^t s(1 + \theta_2)\right].
\end{equation}
Note that this can be written as \(\widehat{b}_0^{(k + 1)}
\left[\left(r - 2^t s\right) + E\right]\) where the round-off term
satisfies \(\left|E\right| \leq \left(r + 2^t s\right) \gamma_3\) for
\(s \in \left[0, 1\right]\). Hence we write
\(\widehat{b}_0^{(k)} = \widehat{b}_0^{(k + 1)} \left[\left(r - 2^t s\right) +
  \left(r + 2^t s\right)\theta_3\right]\).
Since \(\widehat{b}_0^{(n)} = b_0\), we have
\begin{equation}
\widehat{b}_0^{(0)} = b_0 \left[\left(r - 2^t s\right) +
  \left(r + 2^t s\right)\theta_3\right]^n.
\end{equation}
Dividing this by \(b_0^{(0)} = b_0 \left(r - 2^t s\right)^n\) we have
\begin{equation}
\widehat{b}_0^{(0)} = b_0^{(0)} \left(1 + \phi \cdot \theta_3\right)^n.
\end{equation}
Hence our relative error can be bound by
\begin{equation}
\left|\left(1 + \phi \cdot \theta_3\right)^n - 1\right| \leq
\left(1 + \left|\phi\right| \gamma_3\right)^n - 1
\end{equation}
as desired.
\end{proof}

Since
\begin{equation}
\widetilde{p}(s) = \sum_{j = 0}^n \left|b_0 \left(-2^t\right)^j\right|
  B_{j, n}(s) = \left|b_0\right| \left[(1 - s) + 2^t s\right]^n
\end{equation}
we see that \(\cond{p, s} = \widetilde{p}(s) / \left|p(s)\right| =
\left|\phi\right|^n\). So we can compare our improved bound
\(\left(1 + \left|\phi\right| \gamma_3\right)^n - 1\)
to the na\"ive bound \(\gamma_{3n} \left|\phi\right|^n\).

\begin{figure}
  \includegraphics[width=0.75\textwidth]{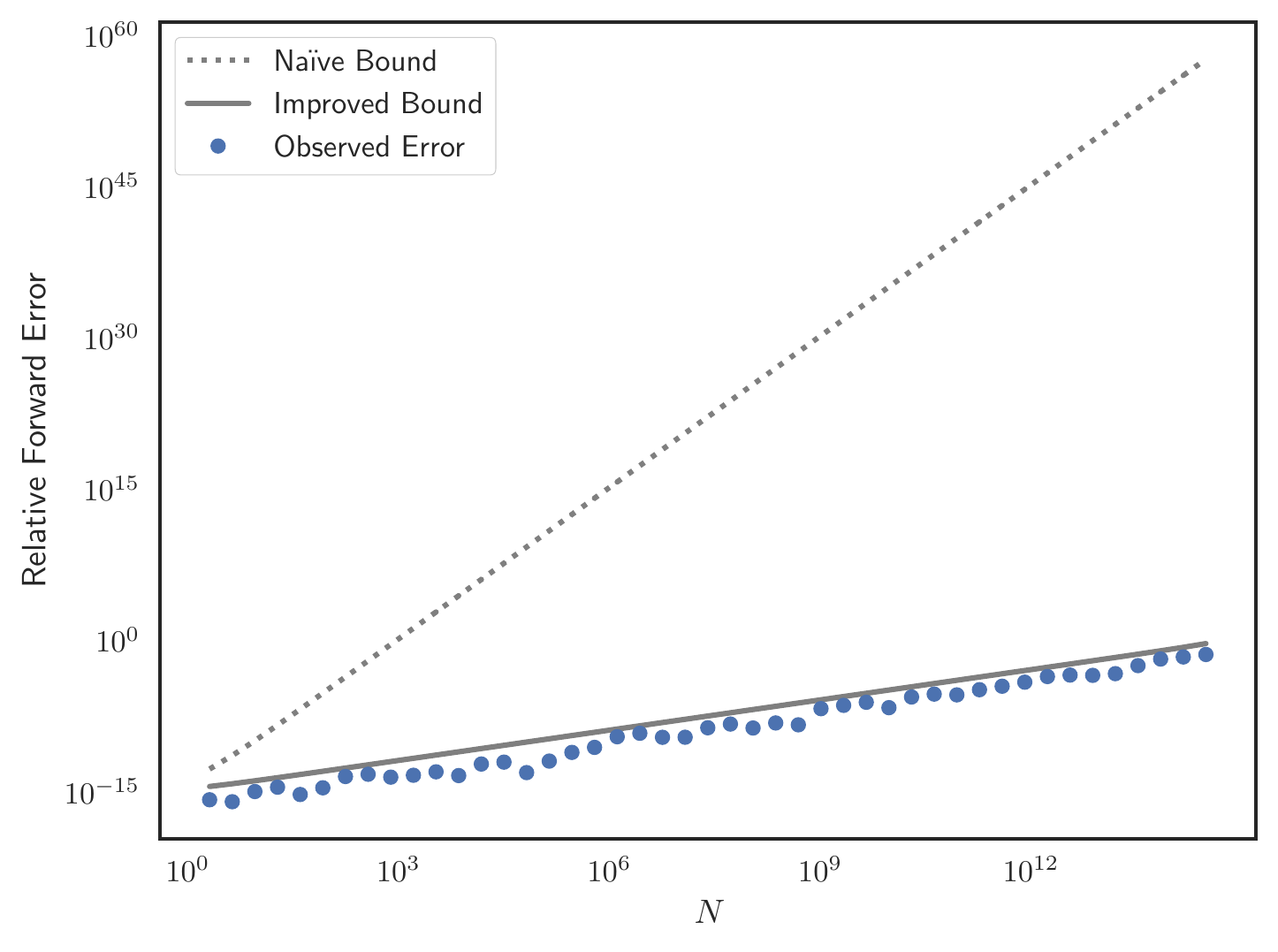}
  \centering
  \captionsetup{width=.75\linewidth}
  \caption{Comparing na\"ive relative error bound to improved bound for
    \(p(s) = (1 - 5s)^{5}\) evaluated at \(s = 1/5 + 8/(25N)\)}
  \label{fig:curbed-condition}
\end{figure}

For example, we'll use \(b_0 = 1, n = 5, t = 2\), i.e. \(p(s) = (1 - 5s)^5\)
for a numerical experiment with. We compare the na\"ive and improved bounds to
the observed relative forward error in Figure~\ref{fig:curbed-condition}.
The figure shows the evaluation of \(p(s)\) at the points
\(\left\{\frac{1}{5} + \frac{8}{25 \cdot 2.1^e}\right\}_{e = 1}^{45}\),
which cause the condition number of evaluation to grow exponentially.
The form \(s = 1/5 + 8/(25N)\) is chosen because it simplifies
\(\phi\):
\begin{equation}
\phi = \frac{1 + 3s}{1 - 5s} = -N - \frac{3}{5}.
\end{equation}
As can be seen in the figure, the observed errors closely match the
improved bound and are significantly smaller than
the na\"ive bound. What's more, the actual error is still useful
(i.e. less than \(\bigO{1}\)) when the condition number becomes
very large.

\section{Implications}\label{sec:implications}

In \cite{Delgado2015}, the de Casteljau algorithm is compared to the VS
algorithm (\cite{Schumaker1986}) along with a few other methods. The VS
algorithm relies on two transformations of \(p(s) =
\sum_{j = 0}^n \widetilde{b}_j (1 - s)^{n - j} s^j\) using
\(\sigma_1 = (1 - s) / s\) and \(\sigma_2 = s / (1 - s)\):
\begin{equation}\label{vs-transform}
p(s) = s^n \left[\widetilde{b}_0 \sigma_1^n + \cdots + \widetilde{b}_n\right]
= (1 - s)^n \left[\widetilde{b}_0 + \cdots + \widetilde{b}_n \sigma_2^n\right].
\end{equation}
See Appendix~\ref{sec:vs-algorithm} for more details on the VS algorithm,
in particular Algorithm~\ref{alg:vs-algorithm} which describes the method and
Theorem~\ref{thm:vs-bound} which provides an error bound.

\begin{figure}
  \includegraphics[width=0.9375\textwidth]{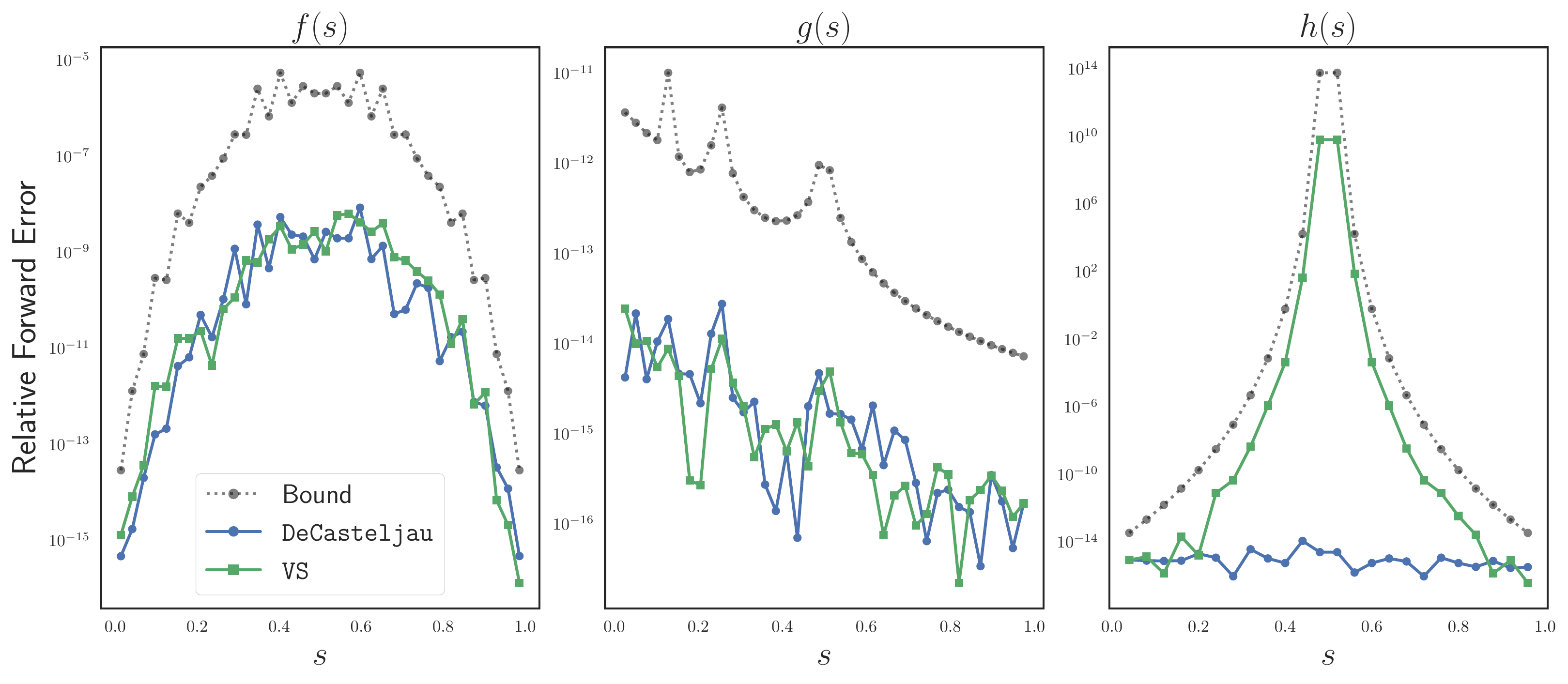}
  \centering
  \captionsetup{width=.75\linewidth}
  \caption{Comparing the de Casteljau and VS algorithms.}
  \label{fig:dp15-compare}
\end{figure}

In \cite{Delgado2015}, the authors use\footnote{The coefficients of \(f(s)\)
and \(g(s)\) cannot be represented exactly in IEEE-754 floating point
double precision.}
\begin{equation}
f(s) = \prod_{j = 1}^{20} (s - j/20), \quad
g(s) = \prod_{j = 1}^{20} (s - 2/2^j), \quad
h(s) = (s - 1/2)^{20}
\end{equation}
to compare the relative error of various methods.
Figure~\ref{fig:dp15-compare} reproduces the numerical experiments they
performed by evaluating \(f(s)\) at
\(\left\{\frac{2j - 1}{72}\right\}_{j = 1}^{36}\),
\(g(s)\) at \(\left\{\frac{j}{39}\right\}_{j = 1}^{38}\) and
\(h(s)\) at \(\left\{\frac{4j}{100}\right\}_{j = 1}^{24}\).
In addition, we have included a dotted line marking the
a prior error bound for each point of evaluation.
Viewing the errors for \(f(s)\) and \(g(s)\), there is no qualitative
difference between the de Casteljau algorithm and the VS algorithm, so
the authors use \(h(s)\) as the tiebreaker to conclude
\begin{displayquote}
the algorithm with a good behavior everywhere is the de Casteljau algorithm.
\end{displayquote}
However,
\begin{equation}
h(s) = \left(s - 1/2\right)^{20} = 2^{-20} (2s - 1)^{20}
  = 2^{-20} \left[(1 - s) - s\right]^{20},
\end{equation}
i.e. it is a member of our special family of polynomials with
\(b_0 = 2^{-20}, n = 20, t = 0\). This gives the de Casteljau
algorithm an unfair advantage over the VS algorithm in a
test case that is not representative of general polynomials.

Given the quadratic growth in the number of arithmetic operations
performed in the de Casteljau algorithm, an alternative with
linear growth (i.e. the VS method) should not be discounted if
it produces similar results. The special family of polynomials
explored here explains why the de Casteljau algorithm performs
so much better than the VS method and shows that such polynomials
are not indicative of the accuracy of the two methods
relative to one another.

\section{Acknowledgements}

The author would like to thank W. Kahan the publication
(\cite{Kahan1972}) that motivated the title of this work and
is closely related to the topic at hand.

\bibliography{curious-case}
\bibliographystyle{alpha}

\appendix

\section{Proof Details}\label{sec:appendix-proof-details}

\begin{proof}[Proof of Theorem~\ref{thm:de-casteljau-bound}]
When using the de Casteljau method, we have
\(b_j^{(n)} = \widehat{b}_j^{(n)} = b_j\) and for \(k = n - 1, \ldots, 0\)
and \(j = 0, \ldots, k\):
\begin{align}
b_j^{(k)} &= (1 - s) b_j^{(k + 1)} + s b_{j + 1}^{(k + 1)} \\
\widehat{b}_j^{(k)} &= \left[(1 \ominus s) \otimes
  \widehat{b}_j^{(k + 1)}\right]
  \oplus \left[s \otimes \widehat{b}_{j + 1}^{(k + 1)}\right].
\end{align}
This means that
\begin{equation}
\widehat{b}_j^{(k)} = (1 - s) \widehat{b}_j^{(k + 1)} (1 + \theta_3)
+ s \widehat{b}_{j + 1}^{(k + 1)} (1 + \theta_2)
\end{equation}
so that
\begin{align}
\widehat{b}_0^{(0)} &= (1 - s) \widehat{b}_0^{(1)} (1 + \theta_3)
+ s \widehat{b}_{1}^{(1)} (1 + \theta_2) \\
&= \widehat{b}_0^{(2)} B_{0, 2}(s) (1 + \theta_6) +
\widehat{b}_1^{(2)} B_{1, 2}(s) (1 + \theta_5) +
\widehat{b}_2^{(2)} B_{2, 2}(s) (1 + \theta_4) \\
&= \sum_{j = 0}^n \widehat{b}_j^{(n)} B_{j, n}(s) (1 + \theta_{3n - j}) \\
&= b_0^{(0)} + \sum_{j = 0}^n b_j B_{j, n}(s) \theta_{3n - j}.
\end{align}
Hence we have
\begin{equation}
\left|p(s) - \mathtt{DeCasteljau}(p, s)\right| \leq \gamma_{3n}
  \widetilde{p}(s)
\end{equation}
as desired. Note that this differs from the bound given in
\cite{Mainar1999}, Corollary 3.2 because the authors don't consider the
round-off when computing \(\widehat{r}\).
\end{proof}

\section{VS Algorithm}\label{sec:vs-algorithm}

In \cite{Schumaker1986}, a modified form of Horner's method is
described for evaluating a polynomial in Bernstein form.
Following~\eqref{vs-transform}, the algorithm applies Horner's
method to the scaled Bernstein coefficients with an input
related to \(s\):

\begin{breakablealgorithm}
  \caption{\textit{VS algorithm for polynomial evaluation.}}
  \label{alg:vs-algorithm}

  \begin{algorithmic}
    \Function{\(\mathtt{result} = \mathtt{VS}\)}{$b, s$}
      \State \(n = \texttt{length}(b) - 1\)
      \State \(\widehat{r} = 1 \ominus s\)
      \If{\(s \geq 1/2\)}
        \State \(\widehat{\sigma} = \widehat{r} \oslash s\)
        \State \(m = s\)
        \State \(\left[c_0, \ldots, c_n\right] =
            \left[b_n, \ldots, b_0\right]\)
      \Else
        \State \(\widehat{\sigma} = s \oslash \widehat{r}\)
        \State \(m = \widehat{r}\)
        \State \(\left[c_0, \ldots, c_n\right] =
            \left[b_0, \ldots, b_n\right]\)
      \EndIf
      \\
      \State \(\widehat{p}_n = c_n\)
      \For{\(k = n - 1, \ldots, 0\)}
        \State \(\widehat{p}_k = \left[\widehat{\sigma} \otimes
            \widehat{p}_{k + 1}\right] + \left[
            \binom{n}{k} \otimes c_{k}\right]\)
      \EndFor
      \\
      \State \(\widehat{m}_1 = \widehat{m}\)
      \For{\(k = 2, \ldots, n\)}
        \State \(\widehat{m}_k = \widehat{m}_{k - 1} \otimes \widehat{m}\)
      \EndFor
      \\
      \State \(\mathtt{result} = \widehat{m}_n \otimes \widehat{p}_0\)
    \EndFunction
  \end{algorithmic}
\end{breakablealgorithm}

This has the benefit of using a linear number of floating point operations,
as compared to the de Casteljau method, which uses a quadratic
number of floating point operations.

\begin{theorem}\label{thm:vs-bound}
The value computed by the VS algorithm
(Algorithm~\ref{alg:vs-algorithm}) satisifies\footnote{
The coefficients \(\gamma_{5n}\) and \(\gamma_{6n}\) differ from
\(\gamma_{4n}\) in \cite{Delgado2009} (Theorem 4.2) because the authors
don't account for the multiplication by \(\binom{n}{k}\) in computing the
scaled Bernstein coefficients or the round-off in \(1 \ominus s\) when
computing \((1 - s)^n\).}
\begin{equation}
\left|p(s) - \mathtt{VS}(p, s)\right| \leq
  \begin{cases}
    \gamma_{6n} \widetilde{p}(s) \quad
      \text{when } s \in \left[0, 1/2\right) \\
    \gamma_{5n} \widetilde{p}(s) \quad \text{when } s \in \left[1/2, 1\right].
  \end{cases}
\end{equation}
\end{theorem}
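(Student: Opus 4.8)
The plan is to carry a running forward-error analysis through the three stages of Algorithm~\ref{alg:vs-algorithm}: (i) forming the transformed ratio $\widehat\sigma$ and the scaled coefficients $\binom{n}{k}\otimes c_k$; (ii) the Horner sweep that produces $\widehat p_0$; and (iii) the power recursion producing $\widehat m_n$ together with the closing product $\widehat m_n\otimes\widehat p_0$. I would run the analysis only for $s\in[0,1/2)$, where $m = \widehat r = 1\ominus s$ and $\sigma = \sigma_2 = s/(1-s)$; the branch $s\in[1/2,1]$ is identical after the substitution $s\leftrightarrow 1-s$ and reversal of the coefficient list, with the single difference that there $m = s$ is an input and hence exactly representable. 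That difference is precisely what separates $\gamma_{5n}$ from $\gamma_{6n}$: when $m = 1\ominus s$, the error of the subtraction gets amplified $n$-fold by the power $m^n$.

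First I would collect the elementary relations, writing $r = 1-s$ and $\widetilde c_k$ for the scaled Bernstein coefficient in the order the algorithm uses. The subtraction gives $\widehat r = r(1+\theta_1)$; the division then gives $\widehat\sigma = \sigma(1+\theta_2)$, absorbing both the rounding of $\oslash$ and the error already present in $\widehat r$; and, assuming $\binom{n}{k}\in\mathbf F$, each scaled coefficient satisfies $\binom{n}{k}\otimes c_k = \widetilde c_k(1+\theta_1)$. Also $\widehat p_n = c_n = \widetilde c_n$ exactly since $\binom{n}{n}=1$, and $\widehat m_1 = m\in\mathbf F$ requires no rounding.

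Next, the Horner recursion $\widehat p_k = (\widehat\sigma\otimes\widehat p_{k+1})\oplus(\binom{n}{k}\otimes c_k)$ together with the relations above gives, per step, $\widehat p_k = \sigma(1+\theta_4)\,\widehat p_{k+1} + \widetilde c_k(1+\theta_2)$ — one rounding from $\otimes$, one from $\oplus$, the $\theta_2$ inherited from $\widehat\sigma$, and the $\theta_1$ from the $\binom{n}{k}$ product. Unrolling from $\widehat p_n$ down to $\widehat p_0$, the contribution of $\widetilde c_k\sigma^k$ carries the factor $(1+\theta_2)(1+\theta_4)^k$ (and $(1+\theta_4)^n$ when $k=n$), which I would bound uniformly by $1+\theta_{4n}$ using properties (2) and (3). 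Since $\widetilde p(s) = m^n\sum_k|\widetilde c_k||\sigma|^k$ in either coefficient ordering (and $\sigma\ge 0$), this yields $|\widehat p_0 - p_0|\le\gamma_{4n}\,\widetilde p(s)/m^n$, where $p_0 = \sum_k\widetilde c_k\sigma^k = p(s)/m^n$ is the exact Horner value, and likewise $|p_0|\le\widetilde p(s)/m^n$.

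Finally, the power sweep gives $\widehat m_n = \widehat r^{\,n}(1+\theta_{n-1}) = r^n(1+\theta_{2n-1})$, and the closing multiplication $\mathtt{result} = \widehat m_n\otimes\widehat p_0 = r^n(1+\theta_{2n})\,\widehat p_0$. Writing $\widehat p_0 = p_0 + \xi$ with $|\xi|\le\gamma_{4n}\widetilde p(s)/r^n$ and using $p(s)=r^n p_0$,
\[
|\mathtt{result}-p(s)| \le r^n\bigl(\gamma_{2n}|p_0| + (1+\gamma_{2n})|\xi|\bigr) \le \widetilde p(s)\bigl(\gamma_{2n}+\gamma_{4n}+\gamma_{2n}\gamma_{4n}\bigr) \le \gamma_{6n}\,\widetilde p(s)
\]
by property (4). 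In the branch $s\ge 1/2$ one has instead $\widehat m_n = s^n(1+\theta_{n-1})$, so the $(1+\theta_{2n})$ above is replaced by $(1+\theta_n)$ and the identical computation gives $\gamma_n+\gamma_{4n}+\gamma_n\gamma_{4n}\le\gamma_{5n}$. The proof is essentially all bookkeeping; the step most prone to error is the per-step accounting in the Horner sweep — it is easy to forget the $\theta_2$ contributed by the already-inexact $\widehat\sigma$ or the $\theta_1$ from the $\binom{n}{k}$ multiplication (exactly the contributions \cite{Delgado2009} omits) — together with keeping straight which branch makes $m^n$ inherit the rounding error of $1\ominus s$.
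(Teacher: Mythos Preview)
Your proposal is correct and follows essentially the same route as the paper: both carry a standard forward-error analysis through the computation of $\widehat\sigma$, the Horner sweep, the power recursion for $\widehat m_n$, and the closing product, and both isolate the extra $\theta_n$ coming from $m=\widehat r$ in the $s<1/2$ branch as the source of the $\gamma_{6n}$ versus $\gamma_{5n}$ distinction. The only cosmetic difference is that the paper tracks the per-term error indices $\theta_{4j+2+n}$ (resp.\ $\theta_{4j+2+2n}$) explicitly through to the end, whereas you collapse the Horner contribution to a uniform $\gamma_{4n}$ early and then invoke property~(4) to combine $\gamma_{4n}$ with $\gamma_{n}$ or $\gamma_{2n}$; the bounds obtained are identical. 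One small notational slip: when you write $\widetilde p(s)=m^n\sum_k|\widetilde c_k|\sigma^k$ in the $s<1/2$ branch, $m$ there is the computed $\widehat r$, not the exact $r=1-s$, so the identity should read $\widetilde p(s)=r^n\sum_k|\widetilde c_k|\sigma^k$ --- which is in fact what you use two lines later when you bound $|\xi|$ by $\gamma_{4n}\widetilde p(s)/r^n$.
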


\begin{proof}
Since the algorithm has two branches depending on \(s \geq 1/2\), we
have to make a few distinctions throughout. However, most arguments
apply to both branches of the algorithm. The following analysis
\textbf{assumes}
that there is no round-off introduced by the computation of binomial
coefficients. For \(n \leq 56\), \(\binom{n}{k}\) can be represented exactly
in IEEE-754 floating point double precision for all \(k\) but
\(\binom{57}{25}\) is the
``first'' binomial coefficient that must be rounded.

In either case, computing
\(\widehat{\sigma} = (1 \ominus s) \oslash s\) or
\(\widehat{\sigma} = s \oslash (1 \ominus s)\) requires two floating
point operations, so
\(\widehat{\sigma} = \sigma\left(1 + \theta_2\right)\). This round-off
factor contributes to \(2n\) of the \(5n\) (or \(6n\)) in the coefficient
of \(\widetilde{p}(s)\).

When \(s < 1/2\), we apply Horner's method to the scaled Bernstein
coefficients, however when \(s \geq 1/2\) we reverse the order
before performing Horner's method. As a result, we refer to
\(c_k\) and \(\widetilde{c}_k\) instead of \(b_k\) and
\(\widetilde{b}_k\). In the \(s < 1/2\) case, \(c_k = b_k\) and
in the other \(c_k = b_{n - k}\).
When computing \(p(s)\), we start with \(\widehat{p}_n = c_n\)
and then for \(k = n - 1, \ldots, 0\):
\begin{equation}
\widehat{p}_k = \left[\widehat{\sigma} \otimes \widehat{p}_{k + 1}\right]
  \oplus \left[\binom{n}{k} \otimes c_k\right].
\end{equation}
This means that
\begin{equation}
\widehat{p}_k = \sigma \widehat{p}_{k + 1} (1 + \theta_4)
+ \widetilde{c}_k (1 + \theta_2)
\end{equation}
so that\footnote{We could ignore round-off from the multiplication by
\(\binom{n}{0}\), but we don't.}:
\begin{align}
\widehat{p}_0 &= \sigma \widehat{p}_1 (1 + \theta_4)
+ \widetilde{c}_0 (1 + \theta_2) \\
&= \sigma^2 \widehat{p}_2 (1 + \theta_8) +
  \sigma \widetilde{c}_1 (1 + \theta_6) +
  \widetilde{c}_0 (1 + \theta_2) \\
&\mathrel{\makebox[\widthof{=}]{\vdots}} \nonumber \\
&= \sigma^n \widehat{p}_n (1 + \theta_{4n}) +
  \sum_{j = 0}^{n - 1} \widetilde{c}_j \sigma^j (1 + \theta_{4j + 2}).
\end{align}
Defining \(\widehat{m}_0 = 1, \widehat{m}_1 = m\)
and \(\widehat{m}_{k + 1} = \widehat{m}_k \otimes m\) we'll have
\(\widehat{m}_n = m^n (1 + \theta_{n - 1})\).
Hence, in the final step of the VS algorithm we have
\begin{equation}
\widehat{m}_n \otimes \widehat{p}_0 =
m^n \widehat{p}_0 (1 + \theta_n) = m^n \left[
  \sigma^n \widetilde{c}_n (1 + \theta_{5n}) +
  \sum_{j = 0}^{n - 1} \widetilde{c}_j \sigma^j
  (1 + \theta_{4j + 2 + n})\right].
\end{equation}
When \(s < 1/2\), \(m = \widehat{r} = (1 - s)(1 + \theta_1)\), hence
\(m^n = (1 - s)^n (1 + \theta_n)\), \(\sigma = s/(1 - s)\) and
\begin{align}
\widehat{m}_n \otimes \widehat{p}_0 &= (1 - s)^n \left[
  \sigma^n \widetilde{b}_n (1 + \theta_{6n}) +
  \sum_{j = 0}^{n - 1} \widetilde{b}_j \sigma^j
  (1 + \theta_{4j + 2 + 2n})\right] \\
&= p(s) + (1 - s)^n \left[
  \sigma^n \widetilde{b}_n \theta_{6n} +
  \sum_{j = 0}^{n - 1} \widetilde{b}_j \sigma^j
  \theta_{4j + 2 + 2n}\right] \\
&= p(s) +
  b_n B_{n, n}(s) \theta_{6n} +
  \sum_{j = 0}^{n - 1} b_j B_{n - j, n}(s)
  \theta_{4j + 2 + 2n}
\end{align}
hence
\begin{equation}
\left|p(s) - \mathtt{VS}(p, s)\right| \leq \gamma_{6n}
  \widetilde{p}(s).
\end{equation}

When \(s \geq 1/2\), \(\sigma = (1 - s)/s\). Since \(m = s\), no extra
round-off accumulates in \(m^n\), but the order of the coefficients
is reversed:
\begin{align}
\widehat{m}_n \otimes \widehat{p}_0 &= s^n \left[
  \sigma^n \widetilde{b}_0 (1 + \theta_{5n}) +
  \sum_{j = 0}^{n - 1} \widetilde{b}_{n - j} \sigma^j
  (1 + \theta_{4j + 2 + n})\right] \\
&= p(s) + s^n \left[
  \sigma^n \widetilde{b}_0 \theta_{5n} +
  \sum_{j = 0}^{n - 1} \widetilde{b}_{n - j} \sigma^j
  \theta_{4j + 2 + n}\right] \\
&= p(s) +
  b_0 B_{0, n}(s) \theta_{5n} +
  \sum_{j = 0}^{n - 1} b_{n - j} B_{n - j, n}(s)
  \theta_{4j + 2 + n}
\end{align}
hence
\begin{equation}
\left|p(s) - \mathtt{VS}(p, s)\right| \leq \gamma_{5n}
  \widetilde{p}(s)
\end{equation}
as desired.
\end{proof}

\end{document}